\theoremstyle{plain}
\newtheorem{thm}{Theorem}
\newtheorem{lem}[thm]{Lemma}
\newtheorem*{conjecture*}{Conjecture}
\newtheorem*{quest*}{Question}
\newcommand{\dist}{\ensuremath{{\rm dist}}}
\title{A correction of a characterization of planar partial cubes}
\author{R\'emi Desgranges\thanks{ENS Cachan, Universit\'e Paris-Saclay} \and Kolja Knauer\thanks{Laboratoire d'Informatique Fondamentale, Aix-Marseille Universit\'e and CNRS,
Facult\'e des Sciences de Luminy, F-13288 Marseille Cedex 9, France}}
\begin{document}

\maketitle

\begin{abstract}
 In this note we determine the set of expansions such that a partial cube is planar if and only if it arises by a sequence of such expansions from a single vertex. This corrects a result of Peterin.
\end{abstract}

\section{Introduction}

A graph is a \emph{partial cube} if it is isomorphic to an isometric subgraph $G$ of a hypercube graph $Q_d$, i.e., $\dist_G(v,w)=\dist_{Q_n}(v,w)$ for all $v,w\in G$. Any isometric embedding of a partial cube into a hypercube leads to the same partition of edges into so-called $\Theta$-classes, where two edges are equivalent, if they correspond to a change in the same coordinate of the hypercube. This can be shown using the Djokovi\'c-Winkler-relation $\Theta$ which is defined in the graph without reference to an embedding, see~\cite{Djo-73,Win-84}.

Let $G^1$ and $G^2$ be two isometric subgraphs of a graph $G$ that (edge-)cover $G$ and such that their intersection $G':=G^1\cap G^2$ is non-empty. The \emph{expansion} $H$ of $G$ with respect to $G^1$ and $G^2$ is obtained by considering $G^1$ and $G^2$ as two disjoint graphs and connecting them by a matching between corresponding vertices in the two resulting copies of $G'$.
A result of Chepoi~\cite{Che-88} says that a graph is a partial cube if and only if it can be obtained from a single vertex by a sequence of expansions. An equivalence class of edges with resepct to  $\Theta$ in a partial cube is an inclusion minimal edge cut. The inverse operation of an expansion in partial cubes is called \emph{contraction} and consists in taking a $\Theta$-class of edges $E_f$ and contracting it. The two disjoint copies of the corresponding $G^1$ and $G^2$ are just the two components of the graph where $E_f$ is deleted.

\section{The flaw and the result}

Let $H$ be an expansion of a planar graph $G$ with respect to $G^1$ and $G^2$. Then $H$ is a \emph{2-face expansion} of $G$ if $G^1$ and $G^2$ have plane embeddings such that $G':=G^1\cap G^2$ lies on a face in both the respective embeddings.
Peterin~\cite{Pet-08} proposes a theorem stating that a graph is a planar partial cube if and only if it can be obtained from a single vertex by a sequence of 2-face expansions. However, his argument has a flaw, since $G'$ lying on a face of $G^1$ and $G^2$ does not guarantee that the expansion $H$ be planar. 
Indeed, Figure~\ref{fig:cxmpl} shows an example of such a 2-face expansion $H$  of a planar graph $G$ that is non-planar.

 \begin{figure}[htb]
  \centering
  \includegraphics{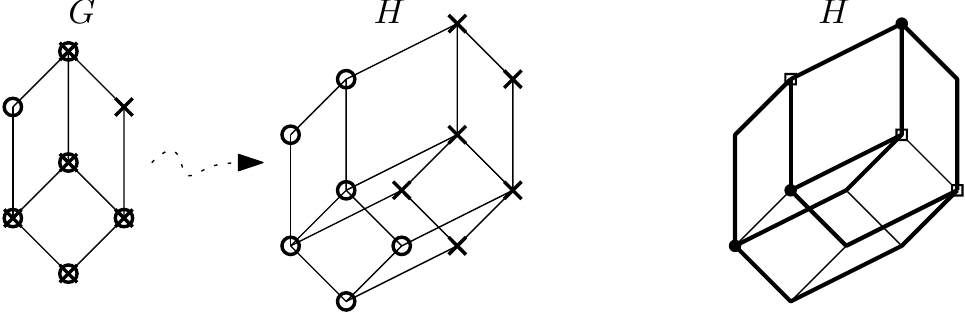}
  \caption{Left: A 2-face expansion $H$ of a planar partial cube $G$, where $G^1$ and $G^2$ are drawn as crosses and circles, respectively. Right: A subdivision of $K_{3,3}$ (bold) in $H$, certifying that $H$ is not planar.}
  \label{fig:cxmpl}
 \end{figure}

The correct concept are non-crossing 2-face expansions: We call an expansion $H$ of a planar graph $G$ with respect to subgraphs $G^1$ and $G^2$ a \emph{non-crossing 2-face expansion} if $G^1$ and $G^2$ have plane embeddings such that $G':=G^1\cap G^2$ lies on the outer face of both the respective embeddings, such that the orderings on $G'$ obtained from traversing the outer faces of $G^1$ and $G^2$ in the clockwise order, respectively, are opposite.

\begin{lem}\label{lem}
 For a partial cube $H\not\cong K_1$ the following are equivalent:
 \begin{itemize}
  \item[(i)] $H$ is planar,
  \item[(ii)] $H$ is a non-crossing 2-face expansion of a planar partial cube $G$,
  \item[(iii)] if $H$ is an expansion of $G$, then $G$ is planar and $H$ is a non-crossing 2-face expansion of $G$.
 \end{itemize}

\end{lem}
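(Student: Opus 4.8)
The plan is to prove the three statements equivalent along the cyclic chain (i) $\Rightarrow$ (iii) $\Rightarrow$ (ii) $\Rightarrow$ (i). The implication (iii) $\Rightarrow$ (ii) is essentially a matter of unwinding quantifiers: since $H\not\cong K_1$ is a partial cube it has an edge and hence a nonempty $\Theta$-class $E_f$; contracting $E_f$ yields a partial cube $G$ of which $H$ is an expansion, the two components of $H-E_f$ being the copies of $G^1$ and $G^2$. Applying the universally quantified statement (iii) to this particular $G$ gives at once that $G$ is planar and that $H$ is a non-crossing 2-face expansion of it, which is exactly (ii).

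For (ii) $\Rightarrow$ (i) I would argue on the sphere. Fix plane embeddings of $G^1$ and $G^2$ witnessing the non-crossing 2-face expansion, i.e.\ with $G'$ on the outer face of each and with opposite clockwise orderings $v_1,\ldots,v_k$ and $v_k,\ldots,v_1$ of the vertices of $G'$. Regarding $S^2$ as two closed caps glued along an equatorial annulus, I would place the embedding of $G^1$ in the southern cap and that of $G^2$ in the northern cap, pushing the two copies of $G'$ onto the two boundary circles of the annulus. Because $G'$ lies on the outer face of both graphs, the interior of the annulus meets neither $G^1$ nor $G^2$, and because the two clockwise orderings are opposite, the corresponding vertices of the two copies of $G'$ appear in the same rotational order on the two boundary circles; hence the matching $E_f$ can be realised by pairwise disjoint radial arcs inside the annulus. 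This produces an embedding of $H$ in $S^2$, so $H$ is planar.

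The substantial direction is (i) $\Rightarrow$ (iii). Let $G$ be an arbitrary partial cube of which $H$ is an expansion, so $H$ arises by expanding along a $\Theta$-class $E_f$ whose removal splits $H$ into connected subgraphs $H^1\cong G^1$ and $H^2\cong G^2$ joined by the matching $E_f$ across the two copies of $G'$. Planarity of $G$ is immediate: contracting the matching $E_f$ is a minor operation, so $G$, being a minor of the planar graph $H$, is planar. It remains to exhibit the non-crossing 2-face structure. I would fix a plane embedding of $H$ and use that $E_f$, being an inclusion-minimal edge cut of the connected planar graph $H$, is a bond and therefore corresponds under planar duality to a simple closed curve $\gamma$ crossing each edge of $E_f$ exactly once and meeting $H$ nowhere else. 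By the Jordan curve theorem $\gamma$ separates the sphere into two regions containing $H^1$ and $H^2$ respectively; restricting the embedding gives plane embeddings of $G^1$ and $G^2$ in which all vertices of $G'$ lie on the single face abutting $\gamma$, which may be taken to be the outer face. Finally, traversing $\gamma$ in a fixed direction induces one cyclic order on the crossing edges $E_f$ and hence on $G'$; read from the $H^1$-side this is the clockwise outer-face order of $G^1$, while read from the $H^2$-side, which lies on the opposite side of $\gamma$, the clockwise outer-face traversal of $G^2$ runs the other way around $\gamma$, so the two clockwise orderings are opposite. This is precisely the non-crossing 2-face condition.

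I expect the orientation bookkeeping in (i) $\Rightarrow$ (iii) to be the main obstacle: converting the purely topological separation by $\gamma$ into the combinatorial statement that the two clockwise boundary orders of $G'$ are mutually reversed, and making precise the passage between a bond and its dual curve (in particular handling the ``outer face'' conventions via the sphere). The analogous orientation check in (ii) $\Rightarrow$ (i) — that opposite clockwise orders are exactly what permits a crossing-free radial matching across the annulus — is the mirror image of this argument, and I would record both carefully so that the two directions rely on consistent conventions.
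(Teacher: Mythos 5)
Your proposal is correct and follows essentially the same route as the paper: the same cyclic chain of implications, the same use of the dual cycle of the bond $E_f$ to locate the two copies of $G'$ on faces and to read off the opposite clockwise orders in (i)$\Rightarrow$(iii), and the same matching-across-a-separating-curve picture (your annulus on the sphere) for (ii)$\Rightarrow$(i). Your version merely spells out the orientation bookkeeping a bit more explicitly than the paper, which delegates part of it to a figure.
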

\begin{proof}~

\noindent (ii)$\Longrightarrow$(i):
Let $G$ be a planar partial cube and $G^1$ and $G^2$ two subgraphs satisfying the preconditions for doing a non-crossing 2-face expansion. We can thus embed $G^1$ and $G^2$ disjointly into the plane such that the two copies of $G':=G^1\cap G^2$ appear in opposite order around their outer face, respectively. Connecting corresponding vertices of the two copies of $G'$ by a matching $E_f$ does not create crossings, because the 2-face expansion is non-crossing, see Figure~\ref{fig:prf}.
Thus, if $H$ is a non-crossing 2-face expansion of $G$, then $H$ is planar.
 
 \begin{figure}[htb]
  \centering
  \includegraphics{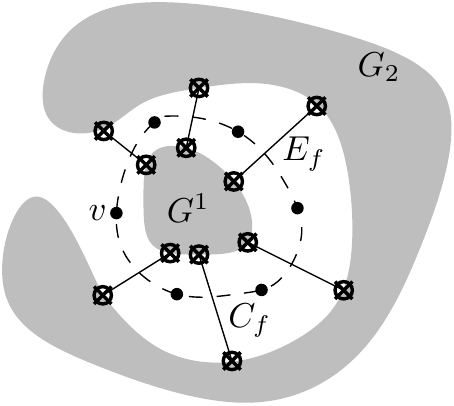}
  \caption{Two disjoint copies of subgraphs $G^1$ and $G^2$ in a planar partial cube~$H$.}
  \label{fig:prf}
 \end{figure}

 \medskip
 
\noindent (i)$\Longrightarrow$(iii): Let $H$ be a planar partial cube, that is an expansion of $G$. Thus, there is a $\Theta$-class $E_f$ of $H$ such that $G=H/E_f$. In particular, since contraction preserves planarity, $G$ is planar. 

Consider now $H$ with some planar embedding. Since $H$ is a partial cube, $E_f$ is an inclusion-minimal edge cut of $H$. Thus, $H\setminus E_f$ has precisely two components corresponding to $G^1$ and $G^2$, respectively. Since $E_f$ is a minimal cut its planar dual is a simple cycle $C_f$. It is well-known, that any face of a planar embedded graph can be chosen to be the outer face without changing the combinatorics of the embedding. We change the embedding of $H$, such that some vertex $v$ of $C_f$ corresponds to the outer face of the embedding, see Figure~\ref{fig:prf}.

Now, without loss of generality $C_f$ has $G^1$ and $G^2$ in its interior and exterior, respectively. Since $C_f$ is connected and disjoint from $G^1$ and $G^2$ it lies in a face of both. By the choice of the embedding of $H$ it is their outer face. Moreover, since every vertex from a copy of $G'$ in $G^1$ can be connected by an edge of $E_f$ to its partner in $G^2$ crossing an edge of $C_f$ but without introducing a crossing in $H$, the copies of $G'$ in $G^1$ and $G^2$ lie on this face, respectively.

Furthermore, following $E_f$ in the sense of clockwise traversal of $C_f$ gives the same order on the two copies of $G'$, corresponding to a clockwise traversal on the outer face of $G^1$ and a counter-clockwise traversal on the outer face of $G^2$. Thus, traversing both outer faces in clockwise order the obtained orders on the copies of $G'$ are opposite. Hence $H$ is a non-crossing 2-face expansion of $G$.

\medskip

\noindent (iii)$\Longrightarrow$(ii): Since $H\not\cong K_1$, it is an expansion of some partial cube $G$. The rest is trivial.
\end{proof}

Lemma~\ref{lem} yields our characterization.

\begin{thm}
 A graph $H$ is a planar partial cube if and only if $H$ arises from a sequence of non-crossing 2-face expansions from $K_1$.
\end{thm}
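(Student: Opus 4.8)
The plan is to derive the theorem directly from Lemma~\ref{lem} by a straightforward induction on the number of $\Theta$-classes of $H$, using the characterization of partial cubes by Chepoi together with the equivalence established in the lemma. The theorem is essentially a bookkeeping statement: the lemma already does all the geometric work of relating planarity of a single expansion to the non-crossing 2-face condition, so the theorem only needs to iterate this one step along an entire expansion sequence.

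For the forward direction, I would assume $H$ is a planar partial cube and argue by induction on the number of edges (equivalently, on the number of $\Theta$-classes). The base case is $H\cong K_1$, which arises from the empty sequence. For the inductive step, if $H\not\cong K_1$, then by Chepoi's theorem $H$ is an expansion of some partial cube $G=H/E_f$, obtained by contracting a $\Theta$-class $E_f$. By Lemma~\ref{lem}, (i)$\Rightarrow$(iii) gives that $G$ is planar and that $H$ is a non-crossing 2-face expansion of $G$. Since $G$ has strictly fewer $\Theta$-classes than $H$ and is a planar partial cube, the induction hypothesis says $G$ arises from $K_1$ by a sequence of non-crossing 2-face expansions; appending the expansion of $G$ into $H$ yields the desired sequence for $H$.

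For the reverse direction, I would again induct, this time on the length of the expansion sequence. If the sequence is empty, $H\cong K_1$ is trivially a planar partial cube. Otherwise $H$ is a non-crossing 2-face expansion of the graph $G$ produced by the shorter initial sequence. By the induction hypothesis $G$ is a planar partial cube, and by Lemma~\ref{lem}, (ii)$\Rightarrow$(i) the expansion $H$ is planar; moreover $H$ is a partial cube because it arises from $G$ by an expansion and expansions preserve the partial cube property (this is the content of Chepoi's characterization). Hence $H$ is a planar partial cube.

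Since the geometric substance is entirely encapsulated in Lemma~\ref{lem}, I do not expect any genuine obstacle in the theorem's proof. The only mild point requiring care is the handling of the $K_1$ endpoints: Lemma~\ref{lem} is stated for $H\not\cong K_1$, so one must treat the single-vertex graph as a separate base case in both inductions rather than feeding it into the lemma. Everything else reduces to chaining the lemma's equivalences along the sequence, so the proof is short and the writeup should simply spell out the two inductions explicitly.
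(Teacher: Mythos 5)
Your proposal is correct and follows essentially the same route as the paper: both directions are handled by induction on the length of the expansion sequence (equivalently, the number of $\Theta$-classes), invoking Lemma~\ref{lem} (i)$\Rightarrow$(iii) for the forward direction and (ii)$\Rightarrow$(i) for the converse, with $K_1$ as the base case. Your explicit remark that expansions preserve the partial cube property is a point the paper leaves implicit, but otherwise the arguments coincide.
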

\begin{proof}~

\noindent $\Longrightarrow$: Since $H$ is a partial cube by the result of Chepoi~\cite{Che-88} it arises from a sequence of expansions from $K_1$. Moreover, all these sequences have the same length correspodning to the number of $\Theta$-classes of $H$. We proceed by induction on the length $\ell$ of such a sequence. If $\ell=0$ the sequence is empty and there is nothing to show. Otherwise, since $H\not\cong K_1$ is planar we can apply Lemma~\ref{lem} to get that $H$ arises by a non-crossing 2-face expansions from a planar partial cube $G$. The latter has a sequence of expansions from $K_1$ of length $\ell-1$ which by induction can be chosen to consist of non-crossing 2-face expansions. Together with the expansion from $G$ to $H$ this gives the claimed sequence from $H$.

\noindent $\Longleftarrow$: Again we induct on the length $\ell$ of the sequence. If $\ell=0$ we are fine since $K_1$ is planar. Otherwise, consider the graph $G$ in the sequence such that $H$ is its non-crossing 2-face expansion. Then $G$ is planar by induction and $H$ is planar by Lemma~\ref{lem}, since it is a non-crossing 2-face expansion of $G$.
\end{proof}

\section{Remarks}

We have characterized planar partial cubes graphs by expansions. Planar partial cubes have also been characterized in a topological way as dual graphs of non-separating pseudodisc arrangements~\cite{Alb-16}. There is a third interesting way of characterizing them. The class of planar partial cubes is closed under \emph{partial cube minors}, see~\cite{Che-16}, i.e., contraction of $G$ to $G/E_f$ where $E_f$ is a $\Theta$-class and restriction to a component of $G\setminus E_f$. What is the family of minimal obstructions for a partial cube to being planar, with respect to this notion of minor? The answer will be an infinite list, since a subfamily is given by the set $\{G_n\square K_2\mid n\geq 3\}$, where $G_n$ denotes the \emph{gear graph} (also known as \emph{cogwheel}) on $2n+1$ vertices and $\square$ is the Cartesian product of graphs. See Figure~\ref{fig:obstr} for the first three members of the family.

 \begin{figure}[htb]
  \centering
  \includegraphics{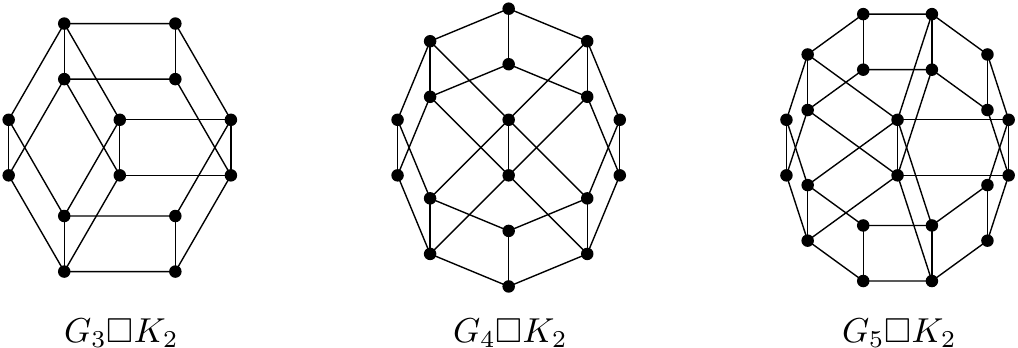}
  \caption{The first three members of an infinite family of minimal obstructions for planar partial cubes.}
  \label{fig:obstr}
 \end{figure}

\subsubsection*{Acknowledgements:} We wish to thank Iztok Peterin for discussing this result with us and two referees for helpful comments.

\bibliography{plit}
\bibliographystyle{my-siam}
 
\end{document}